\documentclass[12pt,reqno]{amsart}
\usepackage[T1]{fontenc}
\usepackage{graphicx}
\usepackage{todonotes}
\usepackage{etoolbox}

\usepackage{color}
\definecolor{MyLinkColor}{rgb}{0,0,0.4}
\usepackage{hyperref}

\definecolor{MyLinkColor}{rgb}{0,0,0.4}

 
\newcommand{\tbe}{\color{black} } 

\newcommand{\R}{{\mathbb R}}

\newcommand{\C}{{\mathcal C}}

\newcommand{\la}{\lambda}

\newcommand{\vp}{\phi}
\newcommand{\ve}{\varepsilon}
\newcommand{\bu}{\bar u}

\newtheorem{thm}{Theorem}

\theoremstyle{definition}
\newtheorem{Definition}[thm]{Definition}
\theoremstyle{remark} 
\newtheorem{rem}[thm]{Remark}

\setlength{\oddsidemargin}{0mm}
\setlength{\evensidemargin}{0mm}
\setlength{\textwidth}{162mm}
\setlength{\textheight}{221mm}
\setlength{\voffset}{-5mm}
 
\makeatletter
\patchcmd{\maketitle}{\@fnsymbol}{\@alph}{}{}  
\makeatother

\title[Symmetric waves are traveling waves]{Symmetric waves are traveling waves for a shallow water equation for surface waves of moderate amplitude }

\author[A. Geyer]{Anna Geyer}
\address{Faculty of Mathematics, University of Vienna, Oskar-Morgenstern-Platz 1, \mbox{1090 Vienna}, Austria.}
\email{anna.geyer@univie.ac.at}
\date{\today}

\keywords{symmetric waves; traveling waves; free surface; shallow water; moderate amplitude.}

\begin{document}

\maketitle

\begin{abstract}
Following a general principle introduced by Ehrnstr\"{o}m et.al.~in \cite{Ehrnstrom2009a}, we prove that for an equation modeling the free surface evolution of moderate amplitude waves in shallow water, all symmetric waves are traveling waves. 
\end{abstract}

\section{Introduction}
\label{s_intro}

In this note we are concerned with the relation between symmetric and traveling wave solutions of  a model equation for surface waves of moderate amplitude in shallow water 
\begin{equation}
\label{eq_MASE}
 u_t+u_x+6uu_x-6u^2u_x+12 u^3u_x+u_{xxx}-u_{xxt} +14 u u_{xxx}+28u_xu_{xx}=0,
\end{equation}
 which arises as an approximation of the Euler equations in the context of homogenous,
 inviscid gravity water waves propagating over a flat bed. The equation was originally derived by Johnson \cite{Joh02} by means of formal asymptotic expansions from the Euler equations. These considerations were followed up by Constantin and Lannes \cite{ConLan09} who proved that the equation approximates the governing equations to the same order as the Camassa-Holm equation \cite{Camassa1993}, which  models the horizontal fluid velocity at a certain depth beneath the fluid  \cite{Joh02}.  An  important feature of equation \eqref{eq_MASE} is that it captures the non-linear phenomenon of wave breaking, in the sense that the surface profile remains bounded but its slope may form singularities in finite time \cite{ConLan09,DurukMutlubas2013a}. The Cauchy problem associated to \eqref{eq_MASE} is locally well-posed in $H^s$, for $s>3/2$,  on the real line as well as  on the circle \cite{DGM14, DurukMutlubas2013b}, and the data-to-solution map was shown  to be non-uniformly continuous \cite{DGM14}. For a well-posedness result in the context of Besov spaces we refer the reader to \cite{Mi2013}, whereas a result on low regularity solutions with  $1< s \leq 3/2$ may be found in  \cite{Liu2014a}, and for global conservative solutions and continuation of solutions beyond wave breaking we point out \cite{Zhou2014}.
 The equation admits smooth as well as cusped and peaked solitary and periodic traveling wave solutions, and also solitary traveling waves with compact support \cite{Gasull2014,Gey12c,GeyMan15}. Furthermore, the smooth solitary traveling wave solutions  are known to be orbitally stable \cite{DurukGeyer2013a}.\\

In the present note we are concerned with symmetry properties of solutions of \eqref{eq_MASE}. 
A vast amount of research has been conducted into considerations regarding symmetry in the context of water waves, for model equations as well for the full governing equations. 
It is astonishing that all steady water waves known to exist are symmetric and that in many cases symmetry can be established a priori, see for example \cite{Craig1988, Garabedian1965, Kogelbauer2015} for solitary and periodic waves  in irrotational flows, 
 \cite{ConEhrWah07,Constantin2004,ConStr04}  for periodic water waves with vorticity,     \cite{walsh2009} for stratified water waves, 
 and \cite{Henry2014} for  equatorial wind waves. A number of characterizations of symmetric waves    in various settings also point towards the importance of this defining feature in the analysis of water waves, see for instance \cite{Matioc2012} for  rotational solitary waves,  \cite{Tulzer2012} for periodic water waves with stagnation points, and \cite{Matioc2014a}  for a  characterization of symmetric steady water waves in terms of the underlying flow. \\

Following a general principle introduced by Ehrnstr\"{o}m et.al.~in \cite{Ehrnstrom2009a}, our main result states that a symmetric wave solution of equation \eqref{eq_MASE} is necessarily a traveling wave. Conversely, we know from the analysis carried out in \cite{Gasull2014,GeyMan15} that all solitary and periodic traveling wave solutions (smooth as well as peaked or cusped) are symmetric about their crest and trough. In light of these considerations we establish the identity between symmetric and  solitary and periodic traveling wave solutions of \eqref{eq_MASE}, cf.~Remark \ref{rem}. We point out that similar results hold for the Camassa-Holm equation \cite{Ehrnstrom2009a} and for the Whitham equation \cite{BruEhrPei15}. 

\section{Main result}
\label{s_mainresult}

Following \cite{Ehrnstrom2009a} and the general principle deduced therein, we make the following definition for  symmetric waves.

\begin{Definition}
\label{def_symmsol}
A solution $u$ is \emph{$x$-symmetric} if there exists a function $\lambda \in \C^1(\R_+)$ such that for every $t>0$, 
\begin{equation*}
    u(t,x)=u(t,2\lambda(t)-x)
\end{equation*}
for a.e.~$x\in\R$. We say that $\lambda(t)$ is the \emph{axis of symmetry}. 
\end{Definition}

The equation for surface waves \eqref{eq_MASE} fulfills the formal requirements of the general principle stated in Theorem 2.2 of \cite{Ehrnstrom2009a} whose proof is carried out for classical solutions. It is possible, however, to modify the argument to extend the solution space beyond differentiable functions and to accommodate for weak solutions. To this end, we  write the equation \eqref{eq_MASE} in weak non-local form as
 \begin{equation}
  \label{eq_MASEnonloc}
  u_t  - \partial_x(u +7u^2) + \partial_x (1-\partial_x^2)^{-1} R(u)= 0, \quad x\in \R,\; t> 0,
  \end{equation}
where
\begin{equation}
    R(u):=2u+10u^2 -2u^3+3u^4-7u_x^2.
\end{equation}
Note that this equation holds in $H^1(\R)$. Indeed, let $P(x):= (1-\partial_x^2)^{-1} R(u) = \frac{1}{2} e^{-|x|}* R(u)$, then $P\in H^1(\R)$ for $u\in H^1(\R)$ by Young's inequality. 
We define weak solutions of equation \eqref{eq_MASE} in the  following way.

\begin{Definition}
\label{def_weak}
A function $u(t,x)$ is a weak solution of equation \eqref{eq_MASE} if $u\in\C(\R_+,H^1(\R))$ satisfies
\begin{equation}
\label{eq_MASEweak}
\iint_{\R_+ \times \R}  u \vp_t -(u+7u^2) \vp_x + (1-\partial_x^2)^{-1}R(u)\vp_x dt dx=0, 
\end{equation}
for all $\vp\in \C^{\infty}_0(\R_+ \times \R)$.
\end{Definition}

We are now ready to state our main result. 

\begin{thm}
\label{MT}
Let $u$ be a weak solution of \eqref{eq_MASE} with initial data such that the equation is locally well-posed. If $u$ is $x$-symmetric then $u$ is a traveling wave solution. 
\end{thm}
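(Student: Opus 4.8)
The plan is to adapt the three-step argument of Ehrnstr\"om, Holden and Raynaud \cite{Ehrnstrom2009a} so that it operates entirely within the class $\C(\R_+,H^1(\R))$ and uses only the uniqueness part of local well-posedness, never differentiating $u$ or $\lambda$.

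\emph{Step 1 (reversibility of the weak equation).} First I would show that for all constants $\sigma,\xi\in\R$ the space--time reflection $u(t,x)\mapsto u(\sigma-t,\xi-x)$ maps weak solutions to weak solutions. Since \eqref{eq_MASEnonloc} is autonomous this reduces to invariance under $(t,x)\mapsto(-t,-x)$, and on the non-local form it is immediate: the kernel $\tfrac12 e^{-|x|}$ is even, so $(1-\p_x^2)^{-1}$ commutes with reflection in $x$; every term of $R(u)=2u+10u^2-2u^3+3u^4-7u_x^2$ is invariant under $x\mapsto-x$ (note that $u_x^2$ is even); and the single derivative $\p_x$ in both $\p_x(u+7u^2)$ and $\p_x(1-\p_x^2)^{-1}R(u)$, together with $\p_t$, change sign, leaving the equation unchanged. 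Concretely this is the change of variables $(t,x)\mapsto(\sigma-t,\xi-x)$, $\vp(t,x)\mapsto\vp(\sigma-t,\xi-x)$ in the weak identity \eqref{eq_MASEweak}, after which one checks that the reflected function still lies in $\C(\R_+,H^1(\R))$.

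\emph{Step 2 (symmetry $+$ uniqueness).} Fix $s>0$ and set $v(t,x):=u(2s-t,2\lambda(s)-x)$, which is a weak solution near $t=s$ by Step 1. By Definition \ref{def_symmsol}, $v(s,\cdot)=u(s,2\lambda(s)-\cdot)=u(s,\cdot)$, so $u$ and $v$ are two weak solutions agreeing at time $s$; local well-posedness forces $u\equiv v$ on a neighbourhood of $s$, and since the resulting relation is itself symmetric under $t\mapsto 2s-t$ a standard open--closed continuation argument extends it to
\begin{equation*}
  u(t,x)=u\bigl(2s-t,\;2\lambda(s)-x\bigr)
\end{equation*}
for a.e.\ $x\in\R$, every admissible $t$ and every $s>0$.

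\emph{Step 3 (rigidity).} Applying the identity of Step 2 first with parameter $s_0$ and then with parameter $s$ gives $u(t,x)=u\bigl(t+2(s-s_0),\,x+2(\lambda(s)-\lambda(s_0))\bigr)$. Hence the set $\mathcal{P}\subseteq\R^2$ of shifts $(\alpha,\beta)$ for which $u(\cdot+\alpha,\cdot+\beta)=u$ --- a closed subgroup, since translation acts continuously on $\C(\R_+,H^1(\R))$ --- contains the continuous curve $s\mapsto\bigl(2(s-s_0),\,2(\lambda(s)-\lambda(s_0))\bigr)$ through the origin. By the structure theorem for closed subgroups of $\R^2$ this curve lies in the identity component $V$ of $\mathcal{P}$, a linear subspace. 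If $u$ is non-constant then $V\neq\R^2$, and since the curve has nonzero first coordinate for $s\neq s_0$, $V$ is precisely the line $\{(\alpha,c\alpha):\alpha\in\R\}$ for a unique $c\in\R$. Comparing second coordinates yields $\lambda(s)-\lambda(s_0)=c(s-s_0)$, so $\lambda$ is affine; and from $(\alpha,c\alpha)\in\mathcal{P}$ with $\alpha=t_0-t$ for a fixed reference time $t_0$ one gets $u(t,x)=u(t_0,x-c(t-t_0))$, i.e.\ $u$ is a traveling wave of speed $c$. The case where $u$ is constant is trivial.

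The step I expect to be the main obstacle is Step 1 at the level of \emph{weak} solutions: one must check that the distributional identity \eqref{eq_MASEweak}, including the non-local term $(1-\p_x^2)^{-1}R(u)$ that couples $u$ with $u_x$, is genuinely invariant under the reflection, and that the reflected function keeps the $\C(\R_+,H^1(\R))$ regularity needed to invoke uniqueness. A secondary technicality is the bookkeeping forced by a possibly finite existence time: the substitution $t\mapsto 2s-t$ confines the arguments of Steps 2--3 to the set of times at which $u$ and its reflection are both defined, and one has to make sure the identities still hold on a large enough part of the domain for the subgroup argument to go through.
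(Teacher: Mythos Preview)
Your argument is correct, but the route is genuinely different from the paper's.

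The paper follows \cite{Ehrnstrom2009a} at the level of the weak formulation itself: it applies the map $f_\lambda(t,x):=f(t,2\lambda(t)-x)$ directly to the test functions in \eqref{eq_MASEweak}, uses the chain-rule identity $(\vp_\lambda)_t=(\vp_t)_\lambda-2\dot\lambda(t)(\vp_x)_\lambda$ (this is where the hypothesis $\lambda\in\C^1$ enters), and after subtracting the transformed identity from the original one eliminates the time derivative in favour of $\dot\lambda(t)\,u_x$. A mollifier in $t$ then localises at a single instant $t_0$, producing the stationary traveling-wave identity \eqref{eq_MASETW} with speed $c=-\dot\lambda(t_0)$; the paper then verifies by hand that $\bar u(t,x):=u(t_0,x+c(t-t_0))$ is a weak solution and invokes uniqueness once, at the very end. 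By contrast, you never derive the traveling-wave equation: Step~1 uses the reversibility of \eqref{eq_MASEnonloc} under $(t,x)\mapsto(-t,-x)$ (which indeed holds, since the kernel $\tfrac12 e^{-|x|}$ is even and $R$ depends on $u_x$ only through $u_x^2$), Step~2 feeds this into uniqueness to obtain the one-parameter family of reflection symmetries $u(t,x)=u(2s-t,2\lambda(s)-x)$, and Step~3 composes two reflections into translations and appeals to the structure of closed subgroups of $\R^2$. Your approach is more structural, uses uniqueness earlier and more essentially, and needs only continuity of $\lambda$; the paper's approach is more computational, keeps closer to the specific form of the equation, and yields the wave speed explicitly as $-\dot\lambda(t_0)$. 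The domain issues you flag (the reflection $t\mapsto 2s-t$ only lands in $\R_+$ for $t<2s$) are genuine but, as you say, routine bookkeeping.
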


\begin{proof}
We follow  \cite{Ehrnstrom2009a} and introduce the notation 
\begin{equation}
    f_{\la} (t,x) := f(t,2\la(t) -x).
\end{equation}
Let $\vp\in   \C_0^1(\R_+,\C_0^3)$ be a test function and
note that the Definition \ref{def_weak} for weak solutions of \eqref{eq_MASE} remains valid, since  $\C^{\infty}_0(\R_+ \times \R)$ is dense in  $ \C_0^1(\R_+,\C_0^3)$. Furthermore,  $\C_0^1(\R_+,\C_0^3)$ is invariant  under $\vp \rightarrow \vp_{\la}$ because $\la\in\C^1$, and in view of $(\vp_{\la})_{\la} = \vp$ we find that this transformation is bijective. Let $u$ be an $x$-symmetric solution of \eqref{eq_MASE}, then,  using bracket notation for distributions, $u$  satisfies 
  \begin{equation}
    \label{eq_MASEweakDistr}
      <u,\vp_t> - <u+7u^2,\vp_x>+<(1-\partial_x^2)^{-1}R(u),\vp_x> = 0.   
  \end{equation}
Since $u$ is symmetric, we have $u=u_{\la}$ and we find that   $<u_{\la},\vp> = -<u,\vp_{\la}>$, $<u_{\la}^2,\vp> =  -<u^2,\vp_{\la}>$ and 
\begin{align*}
    <(1-\partial_x^2)^{-1}R(u_{\la}),\vp> 
        &=\iint_{\R_+ \times \R} \frac{1}{2} e^{-|x|}\ast R(u(t,2\la-x))  \vp(t,x)dt dx \\
        &=\iint_{\R_+ \times \R} \int_{\R}\frac{1}{2} e^{-|s|}R(u(t,2\la -(x-s)))ds\, \vp(t,x)dt dx \\
        &=-\iint_{\R_+ \times \R} \int_{\R}\frac{1}{2} e^{-|s|}R(u(t,x+s))ds\, \vp(t,2\la-x)dt dx \\
        &=-\iint_{\R_+ \times \R} \frac{1}{2} e^{-|x|}\ast R(u(t,x))  \vp(t,2\la-x)dt dx \\
    &=  -<(1-\partial_x^2)^{-1}R(u),\vp_{\la}>.
\end{align*}
Furthermore, we have that
\begin{align*}
      (\vp_{\la})_t= (\vp_t)_{\la}-2\dot\la(t) (\vp_x)_{\la}, \quad (\vp_{\la})_x=-(\vp_x)_{\la}.
\end{align*}
Since $u_{\la}$ satisfies \eqref{eq_MASEweakDistr} we have in view of the above identities that
\begin{align*}
      0&=<u,(\vp_t)_{\la}> - <u+7u^2,(\vp_x)_{\la}>+<(1-\partial_x^2)^{-1}R(u),(\vp_x)_{\la}> \\
      &=<u,(\vp_{\la})_t-2\dot\la(t)(\vp_{\la})_x> + <u+7u^2,(\vp_{\la})_x>-<(1-\partial_x^2)^{-1}R(u),(\vp_{\la})_x>.
\end{align*}
Taking $\vp=\vp_{\la}$ in the above equation, we get
\begin{equation*}
   <u,\vp_t-2\dot\la(t)\vp_x> + <u+7u^2,\vp_x>-<(1-\partial_x^2)^{-1}R(u),\vp_x>=0,
\end{equation*}
 since $(\vp_{\la})_{\la} = \vp$. We subtract the latter equation from \eqref{eq_MASEweakDistr} and obtain
\begin{equation}
\label{eq_eq}
   <u,\dot\la(t)\vp_x> - <u+7u^2,\vp_x>+<(1-\partial_x^2)^{-1}R(u),\vp_x>=0.
\end{equation}
Consider now  the sequence 
\begin{equation*}
    \vp_{\ve} (t,x)=\psi(x) \rho_{\ve}(t)
\end{equation*}
where $\rho_{\ve}\in \C^{\infty}_0(\R)$ is a mollifier with the property that  $\rho_{\ve}(t) \rightarrow \delta(t-t_0)$, the Dirac delta function with mass in $t_0$, as $\ve \rightarrow 0$. Using this sequence in equation \eqref{eq_eq} we obtain
\begin{align}
\label{eq_seqeq}
    \int_{\R}\psi(x) \Big[\int_{\R_+}\dot\la(t) u(t,x)\rho_{\ve}(t) -(u+7u^2)(t,x)\rho_{\ve}(t)  + (1-\partial_x^2)^{-1}R(u(t,x))\rho_{\ve}(t)dt\Big] dx=0.
\end{align}
By assumption we have that $u\in\C(\R_+,H^1(\R))$, and therefore
\begin{equation*}
    \lim_{\ve \rightarrow 0}\int_{\R_+}u(t,x)\rho_{\ve}(t) dt = u(t_0,x)
\end{equation*}
in $L^2(\R)$ and 
\begin{equation*}
    \lim_{\ve \rightarrow 0}\int_{\R_+}u^2(t,x)\rho_{\ve}(t) dt = u^2(t_0,x)
\end{equation*}
in $L^1(\R)$. Furthermore, we have that 
\begin{equation*}
    \lim_{\ve \rightarrow 0}\int_{\R_+} (1-\partial_x^2)^{-1}R(u(t,x))\rho_{\ve}(t) dt =(1-\partial_x^2)^{-1}R(u (t_0,x))
\end{equation*}
in $L^2(\R)$, since $(1-\partial_x^2)^{-1} R(u)\in H^1(\R)\subset L^2(\R)$, cf.~\cite{Rudin}.
Therefore, in the limit  when $\ve$ tends to zero, \eqref{eq_seqeq} implies that $u(t_0,x)$ satisfies \begin{equation}
\label{eq_MASETW}
\int_{\R}  (-\dot\la(t_0)u+u +7u^2) \psi_x - (1-\partial_x^2)^{-1}R(u)\psi_x dx=0, 
\end{equation}
for all $\psi\in \C^{\infty}_0(\R)$.
Set now $c:=-\dot \la(t_0)$ and define  $\bar u(t,x)=u(t_0,x+c(t-t_0))$. We claim that the function $\bar{u}$ is a weak traveling wave solution of \eqref{eq_MASE}. If this is true the result follows immediately, since then $\bar u(t_0,x)=u(t_0,x)$ and in view of uniqueness of solutions we find that $\bar u(t,x)=u(t,x)$ for all $t$. Therefore, $u$ is a traveling wave solution of \eqref{eq_MASE} as we wanted to show. 
To prove the claim, we demonstrate that if a function $U\in H^1(\R)$ satisfies
\begin{equation}
    \label{eq_MASETW}
        \int_{\R}  (cU+U +7U^2) \psi_x - (1-\partial_x^2)^{-1}R(U)\psi_x dx=0, 
\end{equation}
for all $\psi\in \C^{\infty}_0(\R)$, then the function 
\begin{equation*}
     \label{eq_TWsol}
        \bu(t,x)=U(x-c(t-t_0))
\end{equation*}
is a weak solution of \eqref{eq_MASE}. To see that this is true, 
notice that $\bu$ defined as above belongs to $\C(\R, H^1(\R))$ since the translation map is  continuous from $\R$ into $ H^1(\R)$ and $t\rightarrow c(t-t_0)$ is real analytic. We may assume  for convenience that  $t_0=0$ and for any function  $\vp\in \C^{\infty}_0(\R_+\times \R)$ use the shorthand
\begin{equation*}
    \vp_c(t,x):=\vp(t,x+ct).
\end{equation*}
Then we have that $<\bu,\vp>=<U,\vp_c>$ and 
\begin{align*}
    <(1-\partial_x^2)^{-1}R(\bu),\vp>
        &=\iint_{\R_+ \times \R} \frac{1}{2} e^{-|x|}\ast R(U(x-ct))  \vp(t,x)dt dx \\
        &=\iint_{\R_+ \times \R} \int_{\R}\frac{1}{2} e^{-|s|}R(U(x-ct -s))ds\, \vp(t, x)dt dx \\
        &=\iint_{\R_+ \times \R} \int_{\R}\frac{1}{2} e^{-|s|}R(U(r-s))ds\, \vp(t,r+ct)dt dr \\
        &=\iint_{\R_+ \times \R} \frac{1}{2} e^{-|r|}\ast R(U(r))  \vp(t,r+ct)dt dr \\
        &= <(1-\partial_x^2)^{-1}R(U),\vp_c>.
\end{align*}
In view of 
\begin{equation*}
    (\vp_c)_t= (\vp_t)_c+c(\vp_x)_c, \quad (\vp_x)_c=(\vp_c)_x,
\end{equation*}
we obtain
\begin{align*}
    &<\bu,\vp_t> - <\bu+7\bu^2,\vp_x>+<(1-\partial_x^2)^{-1}R(\bu),\vp_x> \\
      &= <U,(\vp_c)_t - c(\vp_c)_x> - <U+7U^2,(\vp_c)_x>+<(1-\partial_x^2)^{-1}R(U),(\vp_c)_x>.
\end{align*}
Observe that, since $U$ is independent of time, we have 
\begin{align*}
     <U,(\vp_c)_t > &= \iint_{\R_+ \times \R} U(x-ct)\partial_t\vp_c(t,x) dt dx \\
                            &=  \int_{\R_+}U(s)\left(\int_{\R} \partial_t\vp_c(t,s+ct) dt\right) ds \\
                            &=  \int_{\R_+}U(s)\left(\vp_c(T,s+cT)-\vp_c(0,s)\right) ds=0, 
\end{align*}
where  $T$ is large enough to be outside of the support of $\vp_c$. Therefore, 
\begin{align*}
   &<\bu,\vp_t> - <\bu+7\bu^2,\vp_x>+<(1-\partial_x^2)^{-1}R(\bu),\vp_x> \\
   &= -c<U,(\vp_c)_x> - <U+7U^2,(\vp_c)_x>+<(1-\partial_x^2)^{-1}R(U),(\vp_c)_x> \\
   &=\iint_{\R_+ \times \R}  -(cU+U+7U^2)(\vp_c)_x + (1-\partial_x^2)^{-1}R(U)(\vp_c)_x dt dx =0,
\end{align*}
since $U\in H^1$ satisfies \eqref{eq_MASETW} with $\psi(x)=\vp_c(t,x)$ which belongs to $\C_0^{\infty}(\R)$ for every given  $t\geq 0$. This shows that $\bu$ is a weak solution  of \eqref{eq_MASE} which proves the claim and concludes the proof. 
\end{proof}

\begin{rem}
\label{rem}
Theorem \ref{MT} states that all $x$-symmetric solutions of equation \eqref{eq_MASE} are traveling wave solutions. Conversely, we know from \cite{Gasull2014} and \cite{GeyMan15} that all smooth, peaked and cusped solitary traveling waves are symmetric with respect to their crests or troughs, and likewise for periodic traveling waves on each period as long as they conserve the energy of the associated planar system. Indeed, for smooth solutions this follows immediately from the fact that the planar system corresponding to the ordinary differential equation satisfied by the traveling wave solutions of \eqref{eq_MASE} has a first integral which is symmetric in its second variable, and smooth traveling wave solutions of \eqref{eq_MASE}  correspond to smooth homoclinic or periodic orbits of the planar differential system  which are symmetric with respect to the horizontal axis and defined on the entire real line. 
On the other hand, orbits corresponding to solutions of \eqref{eq_MASE}  whose first derivative is not continuous or blows up in a countable number of points are heteroclinic or homoclinic connections whose alpha or omega limits tend to or reach a line of singularities in the phase plane. The corresponding solutions are therefore defined only on a subset of $\R$ and may be continued to the whole line by joining these wave segments, cf.~\cite{Gasull2014,GeyMan15}. In this way, it is possible to obtain a plethora of weak solutions with complex shapes, as demonstrated by Lenells \cite{Lenells2005a,Lenells2005} for the Camassa-Holm and Degasperis-Procesi, which are in general clearly not symmetric. However, if one requires that this continuation of solutions conserves the energy of the planar system, i.e. if one imposes that the orbit remains in the same level set of the first integral, then the solution composed of wave segments in this way is necessarily symmetric. The same is true for the Camassa-Holm equation: the weak traveling wave solutions which are not symmetric are the ones composed of wave segments corresponding to orbits with different energy. 
\end{rem}

\subsection*{Acknowledgements}
The author is supported by the FWF project J 3452 ''Dynamical Systems Methods in Hydrodynamics`` of the Austrian Science Fund. 
Part of this work was carried out during a research visit at NTNU Trondheim for which the author gratefully acknowledges support.

\tbe

\bibliographystyle{siam}

\end{document}